\documentclass[12pt]{article}

\usepackage[dvips=false,pdftex=false,vtex=false,paperwidth=21.02cm,paperheight=29.73cm,margin=3.5cm,bottom=3cm,top=3cm,head=2cm,foot=1cm,columnsep=.5cm]{geometry}

\usepackage{graphicx}%
\usepackage{amsmath,amsfonts}%

\usepackage{amssymb}
\usepackage{fourier}
  \renewcommand{\pi}{\otherpi}
\usepackage[normal,sc,font=small]{caption}
\captionmargin \parindent

\newcommand{\T}{\textsc{t}}
\newcommand{\cM}{\mathcal{M}}

\usepackage{textcomp,listings}
\usepackage[svgnames]{xcolor} 
\definecolor{light-gray}{gray}{.95}
\definecolor{dark-gray}{gray}{0.55}
\definecolor{darker-gray}{gray}{0.3}
\usepackage[scaled=.815]{beramono} 
\def\lstbasicfont{\fontfamily{fvm}\selectfont} 
\newcommand{\mcode}[1]{\lstinline[basicstyle=\lstbasicfont,columns=fixed,basewidth=0.6em]!#1!}

\newcommand{\mi}{\hspace{.4pt}\mathrm{i}}
\newcommand{\e}{\hspace{.4pt}\mathrm{e}}

\usepackage{amsthm}
\newtheoremstyle{sftheorem}
  {}
  {}
  {\itshape}
  {}
  {\bfseries}
  {.}
  {.7em}
  {#1 \thmnumber{#2}\thmnote{~(\bfseries #3)}}

\newtheoremstyle{sfdefinition}
  {}%
  {}%
  {\normalfont}%
  {}%
  {\bfseries}%
  {.}%
  {.7em}%
  {#1 \thmnumber{#2}\thmnote{~(\bfseries #3)}}%

\theoremstyle{sfdefinition}
\newtheorem{lemma}{Lemma}[section]
\newtheorem{theorem}[lemma]{Theorem}

\theoremstyle{sfdefinition}
\newtheorem{example}[lemma]{Example}

\newcommand{\rood}[1]{\textcolor{red}{#1}}

\DeclareMathOperator{\tr}{tr}

\makeatletter 
\xdef\@endgadget#1{{\unskip\nobreak\hfil\penalty50\hskip1em\hbox{}\nobreak 
    \hfil#1\parfillskip=0pt\finalhyphendemerits=0\par}}
\def\@Endofsymbol{\decosix}
\def\Endoftheorem{\@endgadget{\@Endofsymbol}}
\makeatother

\newcommand{\mymatrix}[1]{\begin{bmatrix} #1 \end{bmatrix}}
\newcommand{\smat}[1]{\bigl[\begin{smallmatrix} #1 
                            \end{smallmatrix}\bigr]} 
\newcommand{\ssmat}[1]{\begin{smallmatrix} #1 \end{smallmatrix}}

\newcommand{\emphx}[1]{\underline{\smash{#1}}}

\newcommand{\one}{\mathbb{1}}
\newcommand{\mR}{\mathbb{R}}
\newcommand{\mC}{\mathbb{C}}
\DeclareMathOperator{\diag}{diag}
\newcommand{\ep}{\odot}

\DeclareMathOperator{\coloneq}{\,\raisebox{.08ex}{\ensuremath{:}}\!\!=}

\begin{document}

\title{Planar Pendulums in Complex Coordinates}

\author{Gjerrit Meinsma \\
  University of Twente, The Netherlands\\
  \texttt{g.meinsma@utwente.nl}}

\maketitle

\paragraph{Abstract} By exploiting complex coordinates and elementwise
matrix products, a single and simple differential equation is derived
that describes the motion of every multi-arm planar pendulum.

\section{Introduction}
The Euler-Lagrange equation applied to pendulums with multiple arms
leads to cluttered expressions involving numerous trigonometric
identities.  It is well known that trigonometric identities are easier
to handle in complex coordinates.  By exploiting complex coordinates
and elementwise products, we derive in this note a single
comprehensive equation of motion that applies to every planar
pendulum, irrespective of its number of arms, and of how the arms are
connected.  This equation is easy to simulate (although, of course, it
remains chaotic for most pendulums).  It can also be linearized, and
its special form allows to generalize a result from~\cite{BraunM2003}
regarding eigenfrequencies of multi-arm pendulums.

In \S~\ref{sec2} we explain the idea, and, to emphasize the
simplicity, provide \textsc{matlab} code that can simulate every
pendulum.  In \S~\ref{sec3} we state the general result and prove it
using force balances.  In \S~\ref{sec4} and \S~\ref{sec5} we return to
the standard angle and derive and analyze the linearization.

\emph{Notation.} In math expressions, \emph{round} brackets are used
exclusively for function arguments, e.g.~$\sin(t-1)$. We use
\emph{square} brackets for matrices and to group expressions. For
instance, $x[t-1]$ means the product of $x$ and $t-1$. The function of
a matrix is defined as the matrix of the functions, e.g.\
$\cos(\smat{a\\ b})=\smat{\cos(a)\\ \cos(b)}$ and
$\smat{a\\ b}^2=\smat{a^2\\ b^2}$.  The $\ep$ denotes
\emphx{elementwise product} of two column vectors,
e.g.~$\smat{a\\b}\ep\smat{p\\q}=\smat{ap\\bq}$.  The function $\diag$
(for ``diagonal'') turns vectors into diagonal matrices,
e.g.~$\diag(\smat{a\\ b})=\smat{a&0\\0&b}$.  We sometimes use $\one$
to mean the column vector $\one=(1,1,\ldots,1)\in\mR^n$.


\section{Overview of the main result}\label{sec2}
\begin{figure}[htbp]
  \centering  
  \includegraphics{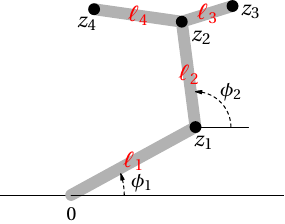}
  \caption{A pendulum with four arms (see \S~\ref{sec2}).}
  \label{fig:fun}
\end{figure}

In this section we give an overview of the main result.
Details and proofs are spelled out in the following section.

Consider the planar pendulum with four arms as shown in
Fig.~\ref{fig:fun}. The lengths of the four arms we denote by
$\ell_1,\ell_2,\ell_3,\ell_4$.  The plane $\mR^2$ we identify with the
complex plane $\mC$, and the positions of the tips of the four arms we
represent by complex numbers, $z_1,z_2,z_3,z_4\in\mC$, see
Fig.~\ref{fig:fun}.  We assume that the first pendulum is able rotate
freely around the pivot at $0\in\mC$.  Hence, the length $\ell_1$ of
the first arm equals $\ell_1=|z_1|$.  We further assume that the
masses are concentrated at the tips of the arms, and we denote the
masses by $m_1,m_2,m_3,m_4\in\mR_+$ (the black dots in the figure).
The angles of the four arms with respect to the positive real axis we
denote by $\phi_1,\phi_2,\phi_3,\phi_4\in\mR$. (Two of these are
indicated in Fig.~\ref{fig:fun}.) To formulate the
differential equation that describes the motion we need the
column vectors
\[
  z\coloneq\mymatrix{z_1\\z_2\\ z_3\\z_4},\quad 
  m\coloneq\mymatrix{m_1\\m_2\\ m_3\\m_4},\quad 
  \ell\coloneq\mymatrix{\ell_1\\\ell_2\\ \ell_3\\ \ell_4},\quad 
  \phi\coloneq
  \mymatrix{\phi_1\\ \phi_2\\ \phi_3\\ \phi_4},\quad 
  \e^{\mi\phi}=
  \mymatrix{\e^{\mi\phi_1}\\ \e^{\mi\phi_2}\\ \e^{\mi\phi_3}\\
    \e^{\mi\phi_4}}. 
\] 
The \emphx{connectivity matrix} $S$ (for lack of a better name) captures
how the arms are connected. For the pendulum of Fig.~\ref{fig:fun}
this is
\[
  \begin{array}{r|cccc|}
    \text{arm~number}:  & 1 & 2 & 3 & 4 \\ \hline 
    z_1 & 1 & 0 & 0 & 0\\
    z_2 & 1 & 1 & 0 & 0\\
    z_3 & 1 & 1 & 1 & 0\\
    z_4 & 1 & 1 & 0 & 1\\ \hline 
  \end{array}
  \quad\implies\quad S= \mymatrix{1&0&0&0\\ 1&1&0&0\\ 1&1&1&0\\1&1&0&1}. 
\]
This defines $S$.  For instance, its final row means that the tip
$z_4$ is ``arm~1 + arm~2 + arm~4'' (see Fig.~\ref{fig:fun}). The
position $z\in\mC^4$ of the tips $z$ of the four arms can now
succinctly be expressed as $S[\ell\ep\e^{\mi\phi}]$. Indeed,
\[
  z
  = \mymatrix{z_1\\ z_2\\ z_3\\ z_4}
  = \mymatrix{\ell_1\e^{\mi\phi_1}\\ z_1+\ell_2\e^{\mi\phi_2}\\
    z_2+\ell_3\e^{\mi\phi_3}\\ z_2+\ell_4\e^{\mi\phi_4}}
  = \mymatrix{1&0&0&0\\1&1&0&0\\1&1&1&0\\1&1&0&1}
  \mymatrix{\ell_1\e^{\mi\phi_1}\\ \ell_2\e^{\mi\phi_2}\\
    \ell_3\e^{\mi\phi_3}\\ \ell_4\e^{\mi\phi_4}}
  =S[\ell\ep\e^{\mi\phi}]. 
\]
In the next section we show that the equation of motion for this
pendulum is the differential equation (DE)
\begin{equation}\label{ODE}
  \ddot{\phi}
   = \cM_{\rm re}^{-1}(\phi)\big[\cM_{\rm im}(\phi)[\dot{\phi}^2]
  -g\cos(\phi)\ep\ell\ep[S^{\T}m]\bigr]. 
\end{equation}
Here, $g$ is the gravitational acceleration, and the matrices
$\cM_{\rm re}(\phi)$ and $\cM_{\rm im}(\phi)$ respectively are the
real and imaginary parts of the \emphx{complex mass matrix}
$\cM(\phi)\in\mC^{4\times 4}$ defined as
\begin{align*}
  \cM(\phi)
  & =\diag(\e^{-\mi\phi}\ep\ell)S^{\T}\diag(m)
    S\diag(\ell\ep\e^{\mi\phi}).
\end{align*}
This matrix is Hermitian and positive definite, and its real part,
$\cM_{\rm re}(\phi)$, is symmetric, positive definite and
invertible. So DE~\eqref{ODE} is well-defined. 

In the next section we show that~\eqref{ODE} is in fact the equation
of motion of \emph{every} planar pendulum, no matter how many arms it
has, and how they are connected.  All we need to do is collect the
masses and lengths, and describe the connectivity
matrix. Then~\eqref{ODE} is the DE. Stated differently, to simulate
the pendulum we only need to customize the first 6 lines of the
following \textsc{matlab} code.
\begin{lstlisting}
L = [2.0; 1.5; 0.75; 1.25];  % whatever length !\textcolor{dark-gray}{$\ell$}!, here that of Figure !\textcolor{dark-gray}{\ref{fig:fun}}! 
m = [1.0; 1.0; 1.00; 0.10];  % whatever mass !\textcolor{dark-gray}{$m$}! 
S = [1 0 0 0; 1 1 0 0; 1 1 1 0; 1 1 0 1]; % connectivity matrix 
phi0  = [0; 0; 0; 0];        % whatever initial angle !\textcolor{dark-gray}{$\phi(0)$}! 
phid0 = [0; 0; 0; 0];        % whatever initial angular velocity !\textcolor{dark-gray}{\smash{$\dot{\phi}(0)$}}! 
t = 0:.01:5;                 % if we want to simulate 5 seconds 
[phi,phid] = pendusim(L,m,S,phi0,phid0,t); % do the simulation 
\end{lstlisting}
It calls \mcode{pendusim} to simulate $\phi(t),\dot{\phi}(t)$, and
this works for every pendulum:
\begin{lstlisting}
function [phi,phid] = pendusim(L,m,S,phi0,phid0,t) 
LSMSL = diag(L)*S'*diag(m)*S*diag(L); 
n = length(L); 
g = 9.81;                    % assuming we are on earth                  
MM = @(phi)diag(exp(-i*phi))*LSMSL*diag(exp(i*phi)); % !\textcolor{dark-gray}{$\cM(\phi)$}! 
xdot = @(t,x,M) [x(n+1:end); % derivative of !\textcolor{dark-gray}{$x(t)\coloneq[\phi(t); \smash{\dot{\phi}(t)}]$}! 
     real(M)\(imag(M))*(x(n+1:end).^2)-g*cos(x(1:n)).*L.*(S'*m)]; 
[~,x] = ode23(@(t,x)xdot(t,x,MM(x(1:n))),t,[phi0;phid0]); 
phi = x(:,1:n)';             % the columns are the !\textcolor{dark-gray}{$\phi(t)$}! 
phid = x(:,n+1:n)';          % the columns are the !\textcolor{dark-gray}{\smash{$\dot{\phi}(t)$}}! 
\end{lstlisting}


\section{Force Balances in $\mC$}\label{sec3}
\begin{figure}[htbp]
  \centering
  \includegraphics{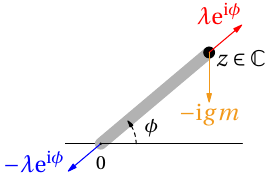}
  \caption{Single-arm pendulum in $\mC$ (see 
    Example~\ref{SP}). The compression forces are shown in red and
    blue. The gravitational force is in shown in orange.}
  \label{fig:SDP}
\end{figure}
In this section we derive DE~\eqref{ODE} using force balances, and
we show that it applies to all planar pendulums.   First
the simplest case:
\begin{example}[Single-arm pendulum]\label{SP} Consider the
  single-arm pendulum in $\mC$ as depicted in Fig.~\ref{fig:SDP}. The
  pendulum (its one arm) can rotate freely around the pivot at
  $0\in\mC$.  Let $\ell$ be the length of the arm, and let $z\in\mC$
  be the position of the tip of the pendulum. Hence,
  \[
    z=\ell\e^{\mi\phi},
  \]
  where ``$\mi$'' is the imaginary unit, and $\phi$ is the argument of the
  complex number $z$. The speed and acceleration,
  $\dot{z},\ddot{z}\in\mC$, thus are
  \begin{align*}
    \dot{z} 
    = \ell\e^{\mi\phi}\mi\dot{\phi},\qquad
    \ddot{z}
    = \ell\e^{\mi\phi}[\mi\ddot{\phi}-\dot{\phi}^2].
  \end{align*}
  The force $F\in\mC$ acting on the tip of the arm (at $z$) consists
  of gravity $gm$ (downwards, so in the ``$-\mi$ direction''), and a
  compression force $\lambda$ in the direction of the pendulum, see
  Fig.~\ref{fig:SDP}. Hence the total force acting on the tip is
  $-\mi g m +\lambda \e^{\mi\phi}\in\mC$ for some as yet unknown
  compression force $\lambda\in\mR$. Newton's 2nd law, $m\ddot{z}=F$,
  therefore becomes
  \[
    m\ell\e^{\mi\phi}[\mi\ddot{\phi}-\dot{\phi}^2]
    = -\mi g m +\lambda\e^{\mi\phi}. 
  \]
  Dividing by $m\ell\e^{\mi\phi}$ gives
  \[
    \mi\ddot{\phi}-\dot{\phi}^2 = -\mi
    \frac{g}{\ell}\e^{-\mi\phi}+\frac{1}{m\ell}\lambda.
  \]
  This equation is linear in the real pair $(\ddot{\phi},\lambda)$,
  and its imaginary and real part respectively determine $\ddot{\phi}$
  and $\lambda$:
  \begin{align*}
    \ddot{\phi} &=-\frac{g}{\ell}\cos(\phi),\\
    \lambda & = -m\ell \dot{\phi}^2+gm \sin(\phi). 
  \end{align*}
  The first of these is the equation of motion. Once we have
  $\phi,\dot{\phi}$, the second equation determines the compression
  force $\lambda$.
  \Endoftheorem
\end{example}
Now the general case. We consider arbitrary \emphx{pendulum trees} ---
like trees in graph theory --- by which we mean:
\begin{itemize}
\item the pendulum consists of a finite number of arms,
\item the arms are connected at the tips (the joints) of the arms,
\item the pendulum contains no cycles,
\item the tip of one-and-only-one arm is fixed to the pivot located at
  $0\in\mC$ (but the arm can rotate freely around the pivot),
\item all arms can rotated freely around their joints (as in, there is
  no friction), and there is a gravitational force,
\item the mass of every arm is concentrated at the tips of the
  arm.
\end{itemize}

For general pendulum trees the derivation of the equation of motion
follows almost the same steps as that of the single-arm pendulum,
except that now we need to describe the connectivity. Let $n$ be
the number of arms, and introduce the vectors of positions, masses,
lengths, angles and exponentials:
\[
  z=\mymatrix{z_1\\ \vdots \\z_n}, ~
  m=\mymatrix{m_1\\ \vdots\\m_n}, ~
  \ell=\mymatrix{\ell_1\\ \vdots\\ \ell_n}, ~
  \phi=\mymatrix{\phi_1\\ \vdots \\ \phi_n}, ~
  \e^{\mi\phi}=\mymatrix{\e^{\mi\phi_1}\\ \vdots\\ \e^{\mi\phi_n}}.
\]
In Lemma~\ref{lem:dual} we show that
\begin{equation}
  \label{eq:cm}
  z= S[\ell\ep\e^{\mi\phi}]
\end{equation}
for some $n\times n$ logical matrix $S$. This $S$ we call the
\emphx{connectivity matrix}.  The first and second derivative of $z$
with respect to time are
\[
  \dot{z}=S\bigl[\ell\ep\e^{\mi\phi}\ep[\mi\dot{\phi}]\bigr],
  \qquad
  \ddot{z}=S\bigl[\ell\ep\e^{\mi\phi}\ep[\mi\ddot{\phi}-\dot{\phi}^2]\bigr].
\]
Each joint $k\in\{1,\ldots,n\}$ experiences a gravitational force,
$-\mi gm_k$, and a number of compression forces, one for each arm that
is connected to that joint.  Let $F_k\in\mC$ be the total force acting
on joint $k$, and let $F\in\mC^n$ be the vector of all $n$ total
forces acting on the $n$ joints/tips. In Lemma~\ref{lem:dual} we show
that
\begin{equation}\label{fizzz}
  F=-\mi g m+ B[\lambda\ep\e^{\mi\phi}]
\end{equation}
where $B$ is some $n\times n$ matrix, and $\lambda\in\mR^n$ are the
$n$ compression forces ``in'' the $n$ arms. The term $-\mi gm$ is the
vector of gravitational forces, and $B[\lambda\ep\e^{\mi\phi}]$ is
the vector of forces due to compression.
\begin{lemma}[Duality]\label{lem:dual}
  For every pendulum tree with $n$ arms we have that~\eqref{eq:cm}
  and~\eqref{fizzz} hold for some $S\in\{0,1\}^{n\times n}$ and
  $B\in\{-1,0,1\}^{n\times n}$. In fact, $B=S^{-\T}$, and all
  eigenvalues of $S$ and $B$ are equal to 1.
\end{lemma}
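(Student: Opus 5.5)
Our plan is to model the pendulum tree as a rooted tree on the arms and read off both $S$ and $B$ from the parent relation. Number the arms $1,\dots,n$ and identify each arm $k$ with its outer tip $z_k$; the pivot $0$ serves as the root. Each arm $k$ runs from a joint that is either the pivot or the tip $z_{p(k)}$ of a unique parent arm $p(k)$. Uniqueness of the parent uses the facts that the pendulum has no cycles and that exactly one arm is attached to the pivot. Define the parent matrix $P\in\{0,1\}^{n\times n}$ by $P_{k,p(k)}=1$, with all other entries $0$; in particular the row of the root arm is zero. Then $z_k=z_{p(k)}+\ell_k\e^{\mi\phi_k}$, with $z_{p(k)}$ replaced by $0$ for the root arm. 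In vector form this reads $z=Pz+\ell\ep\e^{\mi\phi}$, i.e.\ $[I-P]z=\ell\ep\e^{\mi\phi}$.

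Because the tree is finite and acyclic, following parents from any arm reaches the root in at most $n-1$ steps. Hence $P$ is nilpotent: $P^n=0$. This is cleanest to see after renumbering the arms so that each parent has a smaller index than its child (a topological order), which makes $P$ strictly lower triangular. Consequently $I-P$ is invertible, with
\[
  S\coloneq [I-P]^{-1}=I+P+P^2+\cdots+P^{n-1}.
\]
The entry $S_{jk}$ counts the length-$\ge0$ ancestor paths from arm $j$ up to arm $k$. In a tree there is at most one such path, so $S_{jk}=1$ exactly when arm $k$ lies on the path from the pivot to $z_j$, and $S\in\{0,1\}^{n\times n}$. This proves~\eqref{eq:cm}.

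For the forces, arm $k$ carries a compression $\lambda_k\in\mR$. It pushes on its outer tip $z_k$ with force $\lambda_k\e^{\mi\phi_k}$, as in Example~\ref{SP}. By Newton's third law it pushes on its inner joint $z_{p(k)}$ with $-\lambda_k\e^{\mi\phi_k}$; for the root arm this reaction is absorbed by the pivot and does not enter $F$. Summing over the arms meeting at joint $j$ gives
\[
  F_j=-\mi gm_j+\lambda_j\e^{\mi\phi_j}-\sum_{k:\,p(k)=j}\lambda_k\e^{\mi\phi_k}.
\]
This is~\eqref{fizzz} with $B=I-P^{\T}\in\{-1,0,1\}^{n\times n}$. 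Then $B=[I-P]^{\T}=S^{-\T}$, which is the duality claimed in the lemma. Finally, $P$ and $P^{\T}$ are nilpotent, so both $S=[I-P]^{-1}$ and $B=I-P^{\T}$ are unipotent: in the topological order they are triangular with unit diagonal, so all their eigenvalues equal 1.

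The main obstacle is the combinatorial bookkeeping rather than any computation. One must justify that ``pendulum tree'' really yields a well-defined parent map, pointing toward the pivot, in which every arm has exactly one inner joint. One must also confirm the sign convention, namely that the compression acts outward on the tip and inward on the parent joint, and that the pivot reaction is excluded from $F$. Once the parent map and the topological ordering are in place, everything else follows from the Neumann series and the transpose.
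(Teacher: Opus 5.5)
Your argument is correct, but it takes a different route from the paper's. The paper proceeds by induction on $n$:
\begin{itemize}
\item it removes a leaf arm and relabels so that this arm hangs from $z_n$;
\item it writes $S_{n+1}=\smat{S&0\\ S_{n*}&1}$, and takes $B_{n+1}$ to be $B$ bordered by the column $-e_n$ with a final diagonal entry $1$;
\item it verifies $S_{n+1}^{\T}B_{n+1}=I$ by block multiplication, and reads the entry and eigenvalue claims off the block-triangular form.
\end{itemize}
You instead construct both matrices at once from the parent matrix: $S=[I-P]^{-1}=\sum_{k=0}^{n-1}P^k$ and $B=I-P^{\T}$. These are exactly the matrices the induction produces. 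For instance, column $k$ of your $B$ has $+1$ in row $k$ and $-1$ in row $p(k)$, just as in the paper's $B_{n+1}$.

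Your version buys three things.
\begin{itemize}
\item It gives closed forms valid in any numbering. This avoids the paper's tacit ``without loss of generality'' relabelling, which silently permutes $S$.
\item It gives an explicit meaning of $S_{jk}$: arm $k$ lies on the path from the pivot to $z_j$. This is what \S~\ref{sec2} uses informally to read $S$ off a picture.
\item It explains the name ``duality''. Positions are governed by the incidence relation $I-P$, and, through Newton's third law, forces are governed by its transpose. So $B=S^{-\T}$ is immediate rather than an identity to be checked.
\end{itemize}
Incidentally, despite your remark, nothing in your argument needs that only one arm meets the pivot. It covers several arms hinged at $0$ equally well.

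The paper's induction has its own advantages. It is shorter and purely elementary, with no nilpotency or Neumann series. Its bordering step $S\mapsto\smat{S&0\\ S_{n*}&1}$ is also reused in the proof of the trace identity in \S~\ref{sec5}.
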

\begin{proof}
  We prove it by induction in the number of arms $n$. For $n=1$ arms
  (the single-arm pendulum) we have $S=B=1$ (see Example~\ref{SP}) so
  then the result holds.

  Next let $n\in\mathbb{N}$ and suppose that $B=S^{-\T}$ for every
  pendulum tree of $n$ arms, and that all eigenvalues of $S$ and $B$
  are $1$, and $S\in\{0,1\}^{n\times n}$ and
  $B\in\{-1,0,1\}^{n\times n}$ (the induction hypothesis).  Now
  consider an arbitrary pendulum tree with $n+1$ arms. By removing one
  arm it reduces to an $n$-arm pendulum. Without loss of generality we
  denote the tip of the removed arm by $z_{n+1}$, and the tip to which
  it was connected by $z_n$, see Fig.~\ref{fig:fant}. Thus,
  $z_{n+1}=z_n+\ell_{n+1}\e^{\mi\phi_{n+1}}$. The connectivity matrix
  $S_{n+1}$ of the $n+1$-arm pendulum in terms of the connectivity
  matrix $S$ of the $n$-arm pendulum follows from
  \[
    \mymatrix{z\\ z_{n+1}}
    =
    \mymatrix{z\\ z_n + \ell_{n+1}\e^{\mi\phi_{n+1}}}
    = \underbrace{\mymatrix{S & 0\\ S_{n*} &
        1}}_{S_{n+1}}\mymatrix{\ell\e^{\mi\phi}\\ \ell_{n+1}\e^{\mi\phi_{n+1}}},
  \]
  where $S_{n*}$ means the final row of $S$. By adding arm number
  $n+1$ to the pendulum, the total force $\smat{F\\ F_{n+1}}$ (acting
  on the $n+1$ joints/tips) becomes
  \[
    \mymatrix{F\\ F_{n+1}}
    = -\mi g \mymatrix{m\\ m_{n+1}}
    +\underbrace{\left[\begin{array}{c|c}
                         B & \ssmat{0\\[-1ex] \vdots\\ 0\\-1}\\ \hline
                         0 & 1
                       \end{array}\right]}_{B_{n+1}}
                   \mymatrix{\lambda\e^{\mi\phi}\\
                     \lambda_{n+1}\e^{\mi\phi_{n+1}}},
  \]
  (this defines $B_{n+1}$).  This equation expresses that there is one
  additional compression force $\lambda_{n+1}\e^{\mi\phi_{n+1}}$ and
  that it acts on $z_{n+1}$, and also on $z_n$ but then in opposite direction
  (see the red and blue forces in Fig.~\ref{fig:fant}).  By the
  induction hypothesis we have $B=S^{-\T}$, so
  \[
    S_{n+1}^{\T}B_{n+1}=
    \left[\begin{array}{c|c} S^{\T} & S_{n*}^{\T} \\ \hline 0 & 1\end{array}\right]
    \left[\begin{array}{c|c} S^{-\T} &
      \ssmat{0\\[-1ex] \vdots\\ 0\\-1}\\ \hline 0 & 1\end{array}\right]
      =\left[\begin{array}{c|c} I & 0 \\
           \hline 0 & 1\end{array}\right]=I_{n+1}.
  \]
  Hence, $B_{n+1}=S_{n+1}^{-\T}$, and the eigenvalues of $S_{n+1}$ are
  those of $S$ with one more eigenvalue equal to 1. So, by the
  induction hypothesis, they are all equal to $1$. Also, we see that
  $S\in\{0,1\}^{n\times n}$ and $B\in\{-1,0,1\}^{n\times n}$.
\end{proof}
\begin{figure}
  \centering
  \includegraphics{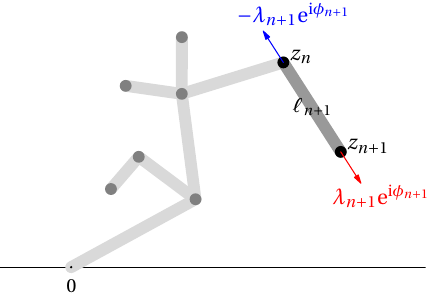}
  \caption{An arbitrary pendulum with $n+1$ arms, seen as a pendulum
    with $n$ arms (light gray) with one more arm added (dark gray).
    The compression force in the added arm is indicated in red and blue.}
  \label{fig:fant}
\end{figure}
The vector of forces $F\in\mC^n$ as given in~\eqref{fizzz} thus equals
$-\mi g m+S^{-\T}[\lambda\ep\e^{\mi\phi}]$, and, so, Newton's 2nd law,
$m\ep\ddot{z}=F$, becomes
\begin{equation}\label{xyzzz}
  m \ep \biggl[S \bigl[\ell\ep\e^{\mi\phi}
  \ep[\mi\ddot{\phi}-\dot{\phi}^2]\bigr]\biggr]
  = -\mi g m+S^{-\T}[\lambda\ep\e^{\mi\phi}]. 
\end{equation}
This equation settles both the equation of motion and the compression
forces:
\begin{theorem}[Complex DE for pendulum trees in $\mC$]\label{CNFP}
  Suppose we have a pendulum tree with $n$ arms and connectivity
  matrix $S$, mass vector $m\in\mR_+^n$, vector of angles
  $\phi:\mR\to\mR^n$, and vector of lengths
  $\ell\in\mR^n_+$. 
  Then
  \begin{align}\label{aaa}
    \ddot{\phi}
    & = \cM_{\rm re}^{-1}(\phi)\big[\cM_{\rm im}(\phi)\dot{\phi}^2
      -g\cos(\phi)\ep\ell\ep[S^{\T}m]\bigr],\\
    \lambda
    & =g\sin(\phi)\ep[S^{\T}m]-\ell^{-1}\ep\bigl[\cM_{\rm 
      im}(\phi)\ddot{\phi}+\cM_{\rm re}(\phi)[\dot{\phi}^2]\bigr].
      \label{bbb}
  \end{align}
  Equation~\eqref{aaa} is the \emphx{real equation of motion},
  and~\eqref{bbb} determines the compression forces $\lambda$.
  Here, $\cM_{\rm re}(\phi)$ and $\cM_{\rm im}(\phi)$ are the real and
  imaginary parts of the $n\times n$ \emphx{complex mass matrix}
  $\cM(\phi)$ defined as
  \begin{equation}\label{massmat}
    \cM(\phi)=\diag(\e^{-\mi\phi}\ep\ell)
    S^{\T}\diag(m) S\diag(\ell\ep\e^{\mi\phi}).
  \end{equation}
  This complex mass matrix is Hermitian and positive definite, and
  its real part is symmetric and positive definite and invertible.
\end{theorem}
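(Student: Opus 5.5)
Starting from Newton's law in the form~\eqref{xyzzz}, the plan is to eliminate the inverse transpose by multiplying on the left by $\diag(\e^{-\mi\phi}\ep\ell)S^{\T}$. Lemma~\ref{lem:dual} gives $S^{\T}S^{-\T}=I$, so the compression term reduces to $\diag(\e^{-\mi\phi}\ep\ell)[\lambda\ep\e^{\mi\phi}]=\ell\ep\lambda$, which is \emph{real}. On the left, $m\ep[Sx]=\diag(m)Sx$ with $x=\ell\ep\e^{\mi\phi}\ep[\mi\ddot\phi-\dot\phi^2]=\diag(\ell\ep\e^{\mi\phi})[\mi\ddot\phi-\dot\phi^2]$. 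The product therefore becomes exactly $\cM(\phi)[\mi\ddot\phi-\dot\phi^2]$ with $\cM$ as in~\eqref{massmat}. The gravity term becomes $-\mi g\,\diag(\e^{-\mi\phi}\ep\ell)S^{\T}m=-\mi g\,\ell\ep\e^{-\mi\phi}\ep[S^{\T}m]$. This yields the complex identity
\[
  \cM(\phi)[\mi\ddot\phi-\dot\phi^2]=-\mi g\,\ell\ep\e^{-\mi\phi}\ep[S^{\T}m]+\ell\ep\lambda .
\]

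Next I split into real and imaginary parts, using that $\ddot\phi,\dot\phi^2,\lambda$ are real and writing $\cM=\cM_{\rm re}+\mi\cM_{\rm im}$. Since $-\mi\e^{-\mi\phi}=-\sin(\phi)-\mi\cos(\phi)$, the imaginary part reads $\cM_{\rm re}\ddot\phi-\cM_{\rm im}\dot\phi^2=-g\cos(\phi)\ep\ell\ep[S^{\T}m]$. This gives~\eqref{aaa} once $\cM_{\rm re}$ is known to be invertible. The real part reads $-\cM_{\rm im}\ddot\phi-\cM_{\rm re}\dot\phi^2=-g\sin(\phi)\ep\ell\ep[S^{\T}m]+\ell\ep\lambda$. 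Dividing elementwise by $\ell>0$ gives~\eqref{bbb}. This mirrors Example~\ref{SP}.

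For the matrix properties, write $D=\diag(\ell\ep\e^{\mi\phi})$, so that $\cM=D^{*}S^{\T}\diag(m)SD=A^{*}A$ with $A=\diag(\sqrt m)SD$. Hence $\cM$ is Hermitian and positive semidefinite. It is positive definite because $A$ is invertible: $S$ has all eigenvalues $1$ by Lemma~\ref{lem:dual}, and $m,\ell>0$ with $|\e^{\mi\phi_k}|=1$. For a Hermitian matrix, $\cM_{\rm re}=(\cM+\overline{\cM})/2$ is real symmetric and $\cM_{\rm im}$ is antisymmetric. For real $x\neq0$ we have $x^{\T}\cM_{\rm re}x=\operatorname{Re}(x^{*}\cM x)=x^{*}\cM x>0$, so $\cM_{\rm re}$ is positive definite and hence invertible.

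The main point of care is the bookkeeping of elementwise versus matrix products in the left multiplication, in particular confirming that the compression term becomes exactly real via $B=S^{-\T}$. That is the step where the duality lemma is essential; everything else is routine.
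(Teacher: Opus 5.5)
Your proposal is correct and follows the paper's proof essentially step for step: you premultiply Newton's law~\eqref{xyzzz} by $\diag(\e^{-\mi\phi}\ep\ell)S^{\T}$, use $B=S^{-\T}$ to make the compression term the real vector $\ell\ep\lambda$, and then read off~\eqref{aaa} from the imaginary part and~\eqref{bbb} from the real part. Your factorization $\cM=A^{*}A$ with $A=\diag(\sqrt{m})SD$, together with the identity $x^{\T}\cM_{\rm re}x=x^{*}\cM x$ for real $x$, only makes explicit the Hermitian and positive-definiteness claims that the paper states without detail.
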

\begin{proof}
  First, realize that $\cM(\phi)$ defined in~\eqref{massmat} is
  Hermitian and positive definite.  Thus its real part is symmetric
  and positive definite. Positive definite matrices are invertible.
  
  The elementwise product $a\ep b$ of two column vectors $a,b$ is the
  same as the usual matrix-vector product $\diag(a)b$.
  So~\eqref{xyzzz} is the same as
  \[
    \diag(m)S\diag(\ell\ep\e^{\mi\phi})[\mi\ddot{\phi}-\dot{\phi}^2]=
      -\mi g m + S^{-\T}[\lambda\ep\e^{\mi\phi}]. 
  \]
  Premultiplying by the invertible $\diag(\e^{-\mi\phi}\ep\ell)S^{\T}$
  turns this into
  \begin{equation}\label{ooo}
    \cM(\phi) [\mi\ddot{\phi}-\dot{\phi}^2]
    =
    -\mi g\e^{-\mi\phi}\ep\ell\ep[S^{\T}m]+ \ell\ep\lambda. 
  \end{equation}
  We call this the \emphx{complex equation of motion}. The imaginary
  part of this expression is
  \begin{equation}\label{ODEn}
    \cM_{\rm re}(\phi)\ddot{\phi}-\cM_{\rm 
      im}(\phi)\dot{\phi}^2=-g\cos(\phi)\ep\ell\ep[S^{\T}m]
  \end{equation}
  which determines the real equation of motion~\eqref{aaa}. Once
  $\ddot{\phi}$ is known, the real part of~\eqref{ooo} uniquely
  determines the compression forces $\lambda$ as given in~\eqref{bbb}.
\end{proof}

For simulation we only need~\eqref{aaa}, i.e.~we can discard the
equation for the compression forces~\eqref{bbb}. 

Modelling pendulums thus boils down to determining the connectivity
matrix $S$. As we saw in \S~\ref{sec2}, this matrix follows easily
from the configuration. For example, the standard triple pendulum has
a lower triangular connectivity matrix with ones on and below the
diagonal:
\[
  \vcenter{\hbox{\includegraphics{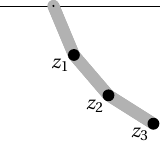}}}
  \quad \implies\quad  
  \mymatrix{z_1\\ z_2\\ z_3}
  =
  \mymatrix{1&0&0\\ 1&1&0\\ 1&1&1}
  \mymatrix{"\text{arm}_1"\\"\text{arm}_2"\\"\text{arm}_3"}
  \quad \implies\quad  
  S=\mymatrix{1&0&0\\ 1&1&0 \\ 1&1&1},  
\] 
and for the following \emph{non}standard triple pendulum we have
\[
  \vcenter{\hbox{\includegraphics{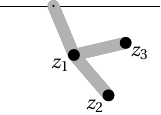}}}
  \quad \implies\quad 
  \mymatrix{z_1\\ z_2\\ z_3}
  =
  \mymatrix{1&0&0\\ 1&1&0\\ 1&\rood{0}&1}
  \mymatrix{"\text{arm}_1"\\"\text{arm}_2"\\"\text{arm}_3"}
  \quad \implies\quad 
  S=\mymatrix{1&0&0\\ 1&1&0 \\ 1&\rood{0}&1}.
\]


\section{Standard angle}\label{sec4}
In complex analysis one always defines the angle $\phi$ relative to
the positive real axis. For pendula it is common to define the angle
relative to the hanging position (we mean relative to
$\phi=-\pi/2$). Let $\theta$ be this angle:
\[
  \theta\coloneq \phi+\frac12\pi\one \in\mR^n,
\]
where $\one=(1,1,\ldots)\in\mR^n$. It is easy to see that
$\cM(\theta)=\cM(\phi)$.  (In fact $\cM(\phi+c\one)=\cM(\phi)$ for
every $c\in\mR$.)  So the complex equation of motion~\eqref{ooo} in
terms of $\theta$ is
\[
  \cM(\theta) [\mi\ddot{\theta}-\dot{\theta}^2]
  -g\e^{-\mi\theta}\ep\ell\ep[S^{\T}m]- \ell\ep\lambda = 0,
\]
and from its imaginary part the real equation of motion follows,
\begin{equation}\label{ODEtheta}
  \cM_{\rm re}(\theta)\ddot{\theta}-\cM_{\rm 
    im}(\theta)\dot{\theta}^2+g\ell\ep[S^{\T}m]\ep\sin(\theta)=0. 
\end{equation}
It is perhaps good to \emph{once} work out~\eqref{ODEtheta} for a
special case, and see if it recovers the standard but gruesome real
formula's.  We do this for the double pendulum, where it is still
manageable.
\begin{example}[Double pendulum --- from complex to real]
  Consider the double pendulum, see Fig.~\ref{fig:dp}. Now
  $S=\smat{1&0\\1&1}$ so the complex mass matrix~\eqref{massmat} in
  this case is
  {\arraycolsep 3pt%
    \begin{align*}
    \cM(\theta) 
    & = \mymatrix{\ell_1\e^{-\mi\theta_1}&0\\0&\ell_2\e^{-\mi\theta_2}}
        \mymatrix{1&1\\ 0&1}\mymatrix{m_1&0\\0&m_2}\mymatrix{1&0\\1&1}
        \mymatrix{\ell_1\e^{\mi\theta_1}&0\\0&\ell_2\e^{\mi\theta_2}}\\
     & = \mymatrix{\ell_1^2(m_1+m_2) & \ell_1\ell_2 m_2 \e^{-\mi(\theta_1-\theta_2)}\\
        \ell_1\ell_2m_2\e^{\mi(\theta_1-\theta_2)} & \ell_2^2 m_2}. 
  \end{align*}}%
  Its real and imaginary parts are 
  \[
    \cM_{\rm re}(\theta)=
    \mymatrix{\ell_1^2(m_1+m_2) & \ell_1\ell_2 m_2 \cos(\theta_1-\theta_2)\\
      \ell_1\ell_2m_2\cos(\theta_1-\theta_2) & \ell_2^2 m_2}, 
  \]
  \[
    \cM_{\rm im}(\theta)=
    \mymatrix{0 & -\ell_1\ell_2 m_2 \sin(\theta_1-\theta_2)\\
      \ell_1\ell_2m_2\sin(\theta_1-\theta_2) & 0}. 
  \]
  With it, the equation of motion~\eqref{ODEtheta} becomes the familiar
  \begin{align*}
    &\mymatrix{
    \ell_1^2(m_1+m_2)\ddot\theta_1+\ell_1\ell_2m_2\cos(\theta_1-\theta_2)\ddot\theta_2\\
        \ell_1\ell_2m_2\cos(\theta_1-\theta_2)\ddot\theta_1+\ell_2^2m_2\ddot\theta_2} \\
     & \qquad - \mymatrix{
           -\ell_1\ell_2m_2\sin(\theta_1-\theta_2)\dot\theta_1^2\\
           +\ell_1\ell_2m_2\sin(\theta_1-\theta_2)\dot\theta_2^2}
    + g\mymatrix{\sin(\theta_1)\ell_1(m_1+m_2)\\
                   \sin(\theta_2)\ell_2m_2}=0.
  \end{align*}
  \Endoftheorem
\end{example}
For pendula with more arms the worked out equations are rather messy.
The point of this note is that we do not have to work it out to such
level of detail. It is more insightful to use~\eqref{aaa}
or~\eqref{ODEtheta}. For example, it simplifies the analysis of the
linearization. This we show in the next section.


\section{Linearization and eigenfrequencies}\label{sec5}
The linearization of the equation of motion~\eqref{ODEtheta} at the
hanging position ($\theta=0$) is
\begin{equation}\label{eq:linz}
  \cM_{\rm re}(0)\ddot\theta+g\ell\ep[S^{\T}m]\ep\theta=0, 
\end{equation}
or, equivalently,
\begin{equation}\label{eq:lin}
  \ddot\theta = -A\theta,
  \qquad
  A \coloneq  g\cM_{\rm re}^{-1}(0)\diag(\ell\ep[S^{\T}m])
  \in\mR^{n\times n}.
\end{equation}

\begin{lemma}[Complete set of eigenmodes]
  For every $n\times n$ connectivity matrix $S$, and every
  $m,\ell\in\mR_+^n,g>0$, the matrix $A$ defined in~\eqref{eq:lin} has
  $n$ positive real eigenvalues $\lambda_1,\ldots,\lambda_n\in\mR_+$,
  and $n$ linearly independent real eigenvectors
  $v_1,\ldots,v_n\in\mR^n$. Consequently, the
  linearization~\eqref{eq:lin} has $n$ complex eigenmodes
  $\theta_k(t)\coloneq v_k\e^{\mi\omega_k t}, k=1,\ldots,n$,
  where $\omega_k\coloneq\sqrt{\lambda}_k$, and every solution
  $\theta(t)$ of~\eqref{eq:lin} is (the real part of) a linear
  combination of these eigenmodes.
\end{lemma}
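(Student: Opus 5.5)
The plan is to write $A$ as a product of two symmetric positive definite matrices and then apply the standard simultaneous-diagonalization argument. Set $M\coloneq\cM_{\rm re}(0)$ and $D\coloneq g\diag(\ell\ep[S^{\T}m])$, so that $A=M^{-1}D$.

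By Theorem~\ref{CNFP}, $M$ is real, symmetric and positive definite. Concretely, at $\phi=0$ the matrix $\cM(0)=\diag(\ell)S^{\T}\diag(m)S\diag(\ell)$ is already real, and it is positive definite because $S$ is invertible by Lemma~\ref{lem:dual} and $m,\ell\in\mR_+^n$. The matrix $D$ is diagonal. Its diagonal entries $g\ell_k[S^{\T}m]_k$ are positive, because $S\in\{0,1\}^{n\times n}$ has nonnegative entries and a unit diagonal (it is unit triangular up to the ordering of the induction in Lemma~\ref{lem:dual}). Hence $[S^{\T}m]_k\ge m_k>0$. Checking that every diagonal entry of $S$ is $1$ is the one point that needs care. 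It follows from the block form $S_{n+1}=\smat{S&0\\S_{n*}&1}$ used in the induction, since each added arm contributes a new diagonal entry equal to $1$.

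Next, $A$ is similar to a symmetric matrix. Let $D^{1/2}$ be the positive diagonal square root of $D$. Then
\[
  D^{1/2}AD^{-1/2}=D^{1/2}M^{-1}D^{1/2}\eqcolon C .
\]
The matrix $C$ is real, symmetric and positive definite, since it is a congruence of the positive definite matrix $M^{-1}$. By the spectral theorem, $C=Q\Lambda Q^{\T}$ with $Q$ real orthogonal and $\Lambda=\diag(\lambda_1,\ldots,\lambda_n)$, where all $\lambda_k>0$. It follows that $A=D^{-1/2}Q\Lambda Q^{\T}D^{1/2}$. The columns $v_k$ of $D^{-1/2}Q$ are therefore $n$ real, linearly independent eigenvectors of $A$, with eigenvalues $\lambda_k>0$. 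As a side remark, the $v_k$ are $D$-orthogonal.

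Finally, for the eigenmodes: $\theta_k(t)=v_k\e^{\mi\omega_kt}$ with $\omega_k=\sqrt{\lambda_k}$ satisfies $\ddot\theta_k=-\omega_k^2\theta_k=-\lambda_kv_k\e^{\mi\omega_kt}=-A\theta_k$. For a general real solution, write $\theta(t)=\sum_k c_k(t)v_k$, which is possible because the $v_k$ form a basis. Equation~\eqref{eq:lin} then decouples into the scalar equations $\ddot c_k=-\lambda_kc_k$. Their solutions are $c_k(t)=\mathrm{Re}(\alpha_k\e^{\mi\omega_kt})$ for some $\alpha_k\in\mC$, using $\lambda_k>0$, and these $\alpha_k$ can match any real initial data $(c_k(0),\dot c_k(0))$. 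Since the $v_k$ are real, $\theta(t)=\mathrm{Re}\sum_k\alpha_k\theta_k(t)$. Uniqueness of solutions of the linear ODE then shows that every solution arises this way.

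The main obstacle is minor. It is making sure that $D$ is positive definite, and not just positive semidefinite, which is what the unit diagonal of $S$ provides. The rest is the standard generalized symmetric eigenproblem.
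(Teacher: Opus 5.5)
Your proof is correct and follows the paper's argument. Both conjugate $A$ by the positive diagonal square root of its diagonal factor $\diag(\ell\ep[S^{\T}m])$, which turns it into a symmetric positive definite congruence of $\cM_{\rm re}^{-1}(0)$, and then read off a real eigenbasis with positive eigenvalues. You add detail the paper omits: you derive $[S^{\T}m]_k\ge m_k>0$ from the unit diagonal of $S$, where the paper uses that $S$ is invertible with nonnegative entries, and you spell out the modal decoupling behind the claim about real solutions.
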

\begin{proof}
  Let $D=\diag(\ell\ep[S^{\T}m])$ and realize that
  $A=g\cM_{\rm re}^{-1}(0)D$.  All entries of $S^{\T}m$ are positive
  because all entries of $m$ are positive, and $S$ is invertible and
  has nonnegative entries only. Since also all entries of $\ell$ are
  positive we see that $D$ has a positive definite square root,
  $\sqrt{D}$.  Now, $A$ is isomorphic to
  $g\sqrt{D}\cM_{\rm re}^{-1}(0)\sqrt{D}$. The latter is symmetric
  positive definite, hence, $\mR^n$ has a basis
  $ v_1,\ldots,v_n$ of eigenvectors of $A$, and the
  eigenvalues $\lambda_1,\ldots,\lambda_n$ of $A$ are real and
  positive. Clearly,
  $\theta(t)\coloneq v_k\e^{\mi\omega_kt}$
  satisfies~\eqref{eq:lin} iff $\omega_k=\pm\sqrt{\lambda_k}$.
\end{proof}
The matrix $\cM_{\rm re}(0)$ equals
$\diag(\ell)S^{\T}\diag(m)S\diag(\ell)$, and therefore, the matrix $A$
defined in~\eqref{eq:lin} equals
\begin{equation}\label{AAis}
  A = g\diag(\ell^{-1})S^{-1}\diag(m^{-1})S^{-\T}\diag(S^{\T}m).
\end{equation}
This one seems to be hard to analyze, but motivated
by~\cite{BraunM2003}, the following can be shown. It generalizes
\cite[Proposition~1]{BraunM2003} to arbitrary pendulum trees:
\begin{lemma}
  For every $n\times n$ connectivity matrix $S$, and every
  $\ell\in\mR_+^n,m\in\mR^n_+,g>0$, the diagonal entries of $A^{-1}$
  equal $(A^{-1})_{kk}=\ell_k/g$. In particular, the trace of $A^{-1}$
  equals $\ell_{\rm tot}/g$ where
  $\ell_{\rm tot}\coloneq\sum_{k=1}^n\ell_k$. Hence, the
  eigenfrequencies $\omega_1,\ldots,\omega_n$ of the
  linearization~\eqref{eq:lin} satisfy
  \[
    \sum_{k=1}^n \frac1{\omega_k^2}=\frac{\ell_{\rm tot}}{g}.
  \]
\end{lemma}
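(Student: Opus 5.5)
Starting from \eqref{AAis}, I invert directly:
\[
  A^{-1}=\tfrac1g\,\diag(S^{\T}m)^{-1}S^{\T}\diag(m)S\diag(\ell).
\]
Since $\diag(\ell)$ multiplies columns, the $(k,k)$ entry is
\[
  (A^{-1})_{kk}=\frac{\ell_k}{g}\,\frac{[S^{\T}\diag(m)S]_{kk}}{[S^{\T}m]_k}.
\]
So the whole claim reduces to the identity $[S^{\T}\diag(m)S]_{kk}=[S^{\T}m]_k$ for every $k$.

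Expanding both sides, $[S^{\T}\diag(m)S]_{kk}=\sum_j m_j S_{jk}^2$ and $[S^{\T}m]_k=\sum_j m_j S_{jk}$. By Lemma~\ref{lem:dual}, $S\in\{0,1\}^{n\times n}$, so $S_{jk}^2=S_{jk}$ and the two sums coincide. Physically, both sums equal the total mass carried at or beyond arm $k$. This gives $(A^{-1})_{kk}=\ell_k/g$.

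Summing over $k$ gives $\tr(A^{-1})=\ell_{\rm tot}/g$. By the previous lemma, $A$ is diagonalizable with positive eigenvalues $\lambda_k=\omega_k^2$. Hence $A^{-1}$ has eigenvalues $1/\omega_k^2$, and its trace equals $\sum_k 1/\omega_k^2$, which is the stated identity.

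I see no real obstacle here. The only points needing care are that $S$ and $S^{\T}m$ are invertible, which holds since the eigenvalues of $S$ are $1$ and $S^{\T}m>0$ as in the previous proof, and that the $0/1$ structure of $S$ from Lemma~\ref{lem:dual} is used in the key step.
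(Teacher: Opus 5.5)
Your proof is correct, and its outer structure matches the paper's: invert \eqref{AAis}, reduce the claim to the identity $(S^{\T}m)_k=(S^{\T}\diag(m)S)_{kk}$, and finish with trace equals sum of eigenvalues. You differ in how you establish that identity. The paper uses induction on the number of arms. It writes the connectivity matrix in the block form $\smat{S&0\\ S_{n*}&1}$ from the proof of Lemma~\ref{lem:dual}, and checks that the new diagonal contributions $m_{n+1}(S_{n*})_k^2$ match the new entries $m_{n+1}(S_{n*})_k$ of $S^{\T}m$. You instead expand both sides entrywise as $\sum_j m_jS_{jk}^2$ and $\sum_j m_jS_{jk}$ and use $S_{jk}^2=S_{jk}$ once. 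Both arguments rest on the same fact, $x^2=x$ for $x\in\{0,1\}$. Your route is shorter and avoids the block bookkeeping; the paper's $(1,1)$ block in fact contains a transposition slip and should read $S^{\T}\diag(m)S+S_{n*}^{\T}m_{n+1}S_{n*}$. Your route also shows that the identity holds for any $0/1$ matrix. The tree structure therefore enters only through Lemma~\ref{lem:dual}, which gives $S\in\{0,1\}^{n\times n}$ and invertibility, and through positivity of $S^{\T}m$. Beyond mirroring the arm-by-arm construction used elsewhere in the paper, the induction adds nothing here.
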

\begin{proof}
  We have $A^{-1}=\diag(S^{\T}m)^{-1}S^\T\diag(m)S\diag(\ell/g)$. From
  that it follows that $(A^{-1})_{kk}$ equals $\ell_k/g$ for all
  $k=1,\ldots,n$ iff
  \begin{equation}\label{eq:gek}
    (S^{\T}m)_k = (S^{\T}\diag(m)S)_{kk}, \qquad k=1,\ldots,n.
  \end{equation}
  The latter we prove by induction. For $n=1$ (the single-arm
  pendulum) the result holds. Now let $n\in\mathbb{N}$, and
  assume~\eqref{eq:gek} holds for pendulums with $n$ arms. The
  connectivity matrix for an $n+1$-arm pendulum is
  $\smat{S&0\\ S_{n*}&1}$ (see the proof of Theorem~\ref{CNFP}). So
  for an $n+1$-arm pendulum the expression ``$S^{\T}m$'' becomes
  \[
    \mymatrix{S^{\T} & S_{n*}^{\T}\\ 0 & 1}
    \mymatrix{m\\ m_{n+1}}
    =
    \mymatrix{S^{\T}m + S_{n*}^{\T}m_{n+1}\\ m_{n+1}}. 
  \]
  We need to show that its entries equal the diagonal entries of
  \begin{align*}
    \mymatrix{S^{\T} & S_{n*}^{\T}\\ 0 & 1}
    &
    \mymatrix{\diag(m)&0\\0&m_{n+1}} \mymatrix{S & 0 \\ S_{n*} & 1}
    =
    \mymatrix{S^{\T}\diag(m)S+S_{n*}m_{n+1}S_{n*}^{\T}& S_{n*}^{\T}m_{n+1}\\
        m_{n+1}S_{n*} & m_{n+1}}.
  \end{align*}
  That is the case because of the induction hypothesis and the fact
  that all entries of $S_{n*}$ are either zero or one.

  Consequently, $\tr(A^{-1})=\ell_{\rm tot}/g$. It is a classic result
  that the trace of a matrix equals the sum of its eigenvalues, and
  the eigenvalues of $A^{-1}$ are $1/\omega_k^2$.
\end{proof}

\begin{figure}[htbp]
  \centering 
  \includegraphics[scale=1]{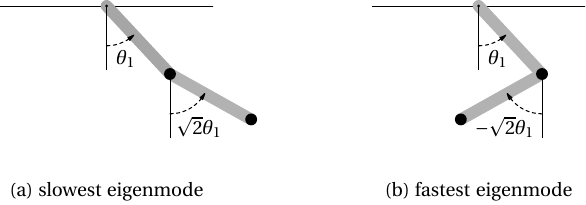}
  \caption{The two eigenmodes of the standard double pendulum. For the 
    slowest eigenmode, the second angle is always $\sqrt{2}$ times the first 
    angle. For the fastest eigenmode, the second angle is always 
    $-\sqrt{2}$ times the first angle. This assumes all masses are the 
    same and all lengths are the same. See Example~\ref{ex:dtp}.}
  \label{fig:dp}
\end{figure}
\begin{figure}[htbp]
  \centering 
  \includegraphics[scale=1]{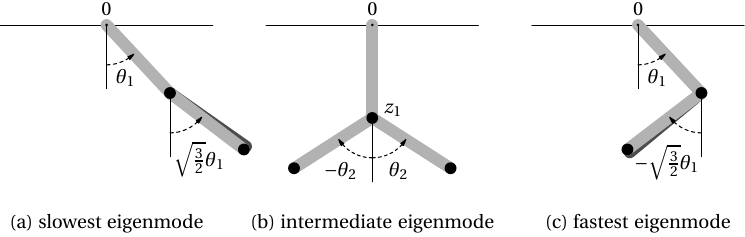}
  \caption{The three eigenmodes of a non-standard triple pendulum. For 
    the slowest mode, the second and third angle coincide and always 
    equal $\sqrt{3/2}$ times the first angle. The intermediate eigenmode is 
    where the first arm does not move and the second and third arm 
    move in opposite direction.  For the fastest eigenmode, the second and 
    third angle again conincide, and they always equal $-\sqrt{3/2}$ times 
    the first angle. This assumes all masses are the same and all 
    lengths are the same. See Example~\ref{ex:dtp}.}
  \label{fig:tp}
\end{figure}

\begin{example}[Standard double and non-standard triple pendulum]\label{ex:dtp}
  An interesting special class of pendulums is when all masses are the 
  same and all lengths are the same. Then \eqref{AAis} simplifies to 
  \[
    A=\frac{g}{\ell_1}\, S^{-1}S^{-\T}\diag(S^{\T}\one). 
  \]
  Within this class we consider two cases. 

  Consider first the standard double pendulum of Fig.~\ref{fig:dp}. 
  Now the connectivity matrix is $S=\smat{1&0\\1&1}$, and the matrix 
  $A$, the eigenfrequencies and eigenvectors turn out to be 
  \begin{align*}
    A = \frac{g}{\ell_1}\mymatrix{2&-1\\-2&2}, 
    \quad 
    \mymatrix{\omega_1\\\omega_2}
    =\sqrt{\frac{g}{\ell_1}}
       \mymatrix{\sqrt{2-\sqrt2}\\ \sqrt{2+\sqrt2}}, 
    \quad 
     v_1 =\mymatrix{1\\\sqrt2}, \quad 
     v_2 =\mymatrix{1\\-\sqrt2}. 
  \end{align*}
  Two corresponding eigenmodes $v_k\e^{\mi\omega_k t}$
  are indicated in Fig.~\ref{fig:dp}. 

  Consider next the \emph{non}-standard triple pendulum whose three 
  arms all connect at $z_1$, see Fig.~\ref{fig:tp}(b). The 
  connectivity matrix $S$, the matrix $A$ and eigenfrequencies are 
  \[
    S=\mymatrix{1&0&0\\1&1&0\\1&0&1}, 
    \quad 
    A=\frac{g}{\ell_1}\mymatrix{3&-1&-1\\-3&2&1\\-3&1&2}, 
    \quad 
    \mymatrix{\omega_1\\ \omega_2\\ \omega_3}
    =\sqrt{\frac{g}{\ell_1}} \mymatrix{\sqrt{3-\sqrt6}\\ 1\\ \sqrt{3+\sqrt6}}, 
  \]
  and corresponding eigenvectors are 
  \[
     v_1=\mymatrix{1\\ \sqrt{3/2}\\ \sqrt{3/2}},\qquad 
     v_2=\mymatrix{0\\ 1\\ -1},\qquad 
     v_3=\mymatrix{1\\ -\sqrt{3/2}\\ -\sqrt{3/2}}. 
  \]
  Figure~\ref{fig:tp} depicts the three eigenmodes 
  $v_k\e^{\mi\omega_k t}, k=1,2,3$.  \Endoftheorem 
\end{example}


\bibliography{pendulumsincc}

\begin{thebibliography}{1}

\bibitem{BraunM2003}
M.~Braun.
\newblock On some properties of the multi pendulum.
\newblock {\em Archive of Applied Mechanics}, 72:899--910, 2003.
\newblock DOI: 10.1007/s00419-002-0263-4.

\end{thebibliography}

\end{document}